\newtheorem{theorem}{Theorem}[section]
\newtheorem{corol}[theorem]{Corollary}
\newtheorem{prop}[theorem]{Proposition}
\theoremstyle{definition}
\newtheorem{definition}[theorem]{Definition}
\newtheorem{remark}{Remark}[section]
\newenvironment{notation and conventions}{\textbf{Notation and conventions.}}{ }
\DeclareFontFamily{U}{rsf}{} \DeclareFontShape{U}{rsf}{m}{n}{ <5> <6> rsfs5 <7> <8> <9> rsfs7 <10-> rsfs10}{}
\DeclareMathAlphabet\Scr{U}{rsf}{m}{n}
\definecolor{pink}{rgb}{1,0,1}
\begin{document}

\begin{center}
\baselineskip=14pt{\LARGE
 On characteristic classes of singular hypersurfaces and involutive symmetries of the Chow group\\
}
\vspace{1.5cm}
{\large  James Fullwood$^{\spadesuit}$
  } \\
\vspace{.6 cm}

${}^\spadesuit$Institute of Mathematical Research, The University of Hong Kong, Pok Fu Lam Road, Hong Kong.\\

\end{center}

\vspace{1cm}
\begin{center}

{\bf Abstract}
\vspace{.3 cm}
\end{center}

{\small
For any algebraic scheme $X$ and every $(n,\mathscr{L})\in \mathbb{Z}\times \text{Pic}(X)$ we define an associated involution of its Chow group $A_*X$, and show that certain characteristic classes of (possibly singular) hypersurfaces in a smooth variety are interchanged via these involutions. For $X=\mathbb{P}^N$ we show that such involutions are induced by involutive correspondences.  
}
 \tableofcontents{}






\section{Introduction}\label{intro}
Fix an algebraically closed field $\mathfrak{K}$ of characteristic zero, let $M$ be a smooth $\mathfrak{K}$-variety and let $X\subset M$ be a hypersurface. For singular $X$ there exists a generalization of the notion of `Milnor number' to arbitrary singularities which is a characteristic class supported on the singular locus of $X$ referred to in the literature as the \emph{Milnor class} of $X$, which we denote by $\mathcal{M}(X)$. The Milnor class of $X$ may be defined (up to sign) as the difference between its \emph{Fulton class} $c_{\text{F}}(X)$ and its \emph{Chern-Schwartz-MacPherson} (or simply \emph{CSM}) class $c_{\text{SM}}(X)$. Both the Fulton class and CSM class are elements of the Chow group $A_*X$ which are generalizations of Chern classes to the realm of singular varieties in the sense that the classes both agree with the total homology Chern class in the case that $X$ is smooth\footnote{We give a more in-depth discussion of all classes mentioned here in \S\ref{S1}. For an introduction to characteristic classes for singular varieties proper we recommend \cite{CharacteristicSingularVariety}.}. Another characteristic class suppoerted on the singular locus of a hypersurface $X$  is the \emph{L\^e-class} of $X$, denoted $\Lambda(X)\in A_*X$, which was first defined in \cite{SeadeLe} and named as such as they are closely related to the so-called \emph{L\^e-cycles}  of $X$, which were initially defined and studied independent of Milnor classes \cite{MasseyLe}. The attractive result of \cite{SeadeLe} is that both $\mathcal{M}(X)$ and $\Lambda(X)$ determine each other in a completely symmetric way, i.e.,

\begin{equation}
\label{mcf}
\mathcal{M}_k(X)=\sum_{j=0}^{d-k}(-1)^{j+k}\left(\begin{array}{c} j+k \\ k \end{array}\right)c_1(\mathscr{O}(X))^j\cap \Lambda_{j+k},
\end{equation}
and

\begin{equation}
\label{lcf}
\Lambda_k(X)=\sum_{j=0}^{d-k}(-1)^{j+k}\left(\begin{array}{c} j+k \\ k \end{array}\right)c_1(\mathscr{O}(X))^j\cap \mathcal{M}(X)_{j+k},
\end{equation}
\\
where $d$ is the dimension of the singular locus of $X$ and a $k$th subscript on a class denotes its component of dimension $k$. 

This fact seems to suggest the existence of some non-trivial involutive symmetry of $A_*X$ which exchanges $\mathcal{M}(X)$ and $\Lambda(X)$, which we show in \S\ref{S3}  is the case. In fact, both $\mathcal{M}(X)$ and $\Lambda(X)$ may be recovered from the relative \emph{Segre class} (see Definition \ref{scd}) $s(X_s,M)$ of the singular scheme $X_s$ of $X$ (i.e., the subscheme of $X$ whose ideal sheaf is locally generated by the partial derivatives of a local defining equation for $X$), and we show that there exists a countable infinity of such involutive symetries of $A_*X$ which exchange $\mathcal{M}(X)$ and classes closely related to $s(X_s,M)$, for which the result of \cite{SeadeLe} is but one of them.

The proofs in \cite{SeadeLe} of formulas \ref{mcf} and \ref{lcf} are topological in nature, span many pages and involve heavy machinery such as derived categories and Whitney stratifications, obfuscating any conceptual insight as to why such formulas should hold. However, in an unpublished note \cite{AluffiLe}, Aluffi shows that the proofs may be reduced to merely a few lines using his `intersection-theoretic calculus' (which is purely algebraic). Our observation is that Aluffi's proof may be immediately generalized to show that for every $(n,\mathscr{L})\in \mathbb{Z}\times \text{Pic}(X)$ (for any algebraic $\mathfrak{K}$-scheme $X$) there corresponds an involutive symmetry of $A_*X$, and that not only Milnor classes and L\^e-cycles, but other characteristic classes such CSM classes and Aluffi classes of singular schemes of hypersurfaces (which may be integrated to yield the Donaldson-Thomas type invariant of the singular scheme) are both interchanged in a similar way with classes closely related to the Segre class of its singular scheme as well. As such, the theme of this note is that the symmetric formulas \ref{mcf} and \ref{lcf} are but a special case of a more general phenomenon of characteristic classes of hypersurfaces, which are induced by involutive symmetries of their Chow groups. 
\\

\noindent \emph{Acknowledgements.} We would like to thank Jos\`{e} Seade for sharing with us the preprint \cite{SeadeLe}, and for our discussions on Milnor classes. We also thank Paolo Aluffi for sharing with us the unpublished note \cite{AluffiLe}, for which this article is little more than a mere rewriting of.

\section{Characteristic classes of singular hypersurfaces}\label{S1} The total Chern class $c(X)$ of a smooth $\mathfrak{K}$-variety $X$ is \emph{the} fundamental characteristic class for $\mathfrak{K}$-varieties in the sense that all other reasonable notions of characteristic class are linear combinations of Chern classes over a suitable ring. For those interested in singularities, it is then only natural that one would want to generalize the notion of Chern class to the realm of singular varieties (and schemes) in such a way that that they agree with the usual Chern class for smooth varieties. The CSM class $c_{\text{SM}}(X)$ of a possibly singular variety $X$ is in some sense the most direct generalization, since for $\mathfrak{K}=\mathbb{C}$ it generalizes the Poincar\'e-Hopf (or Gau{\ss}-Bonnet) theorem to the realm of singular varieties, i.e., 

\[
\int_X c_{\text{SM}}(X)=\chi(X),
\]
\\ 
where $\chi(X)$ denotes the topolocial Euler characteristic with compact supports, and the integral sign is notation for proper pushforward to a point\footnote{We note that while CSM classes were first defined over $\mathbb{C}$ \cite{MCC}, their definition was later generalized to an arbitrary algebraically closed field of characteristic zero in \cite{KCC}.} (whose Chow group is $\mathbb{Z}$). For arbitrary $\mathfrak{K}$ (algebraically closed of characteristic zero) we simply \emph{define} the Euler characteristic of a $\mathfrak{K}$-variety as the `integral' of its CSM class. Moreover, CSM classes are a generalization of counting in the sense that they obey inclusion-exclusion (which of course is very useful for computations). In \cite{MR1697199}, Aluffi obtained a very nice formula for the CSM class of a hypersurface in terms of the Segre class (see Definition \ref{scd}) of its singular scheme, and since we are only concerned with hypersurfaces in this note we may use his formula as a working definition (we recall Aluffi's formula in \S \ref{S3}, after introducing some useful notations). 

Another class generalizing the Chern class to the realm of singular varieties and schemes is the Fulton class, which is defined for any subscheme of a smooth $\mathfrak{K}$-variety $M$\footnote{From here on we will refer to such schemes as \emph{embeddable schemes}.}. For $X$ a (possibly singular) local complete intersection, its Fulton class $c_{\text{F}}(X)$ agrees (after pushforward to $M$) with the total Chern class of a smooth variety in the same rational equivalence class as $X$, and so $c_{\text{SM}}(X)$ differs from $c_{\text{F}}(X)$ only in terms of dimension less than or equal to the dimension of its singular locus. The difference $c_{\text{SM}}(X)-c_{\text{F}}(X)$ then measures the discrepancy of $c_{\text{SM}}(X)$ from the Chern class of a smooth deformation of $X$ (parametrized by $\mathbb{P}^1$), and is an invariant precisely of the \emph{singularities} of $X$. For $X$ with only isolated singularities (over $\mathbb{C}$) the integral of $c_{\text{SM}}(X)-c_{\text{F}}(X)$ agrees (up to sign) precisely with the sum of the Milnor numbers of each singular point of $X$, thus it seemed natural to refer to this class generalization of global Milnor number as the `Milnor class' of $X$, which we denote by $\mathcal{M}(X):=c_{\text{SM}}(X)-c_{\text{F}}(X)$\footnote{We blindly ignore any sign conventions some may associate with this class in the literature.} (recall that we are assuming $X$ is a local complete intersection here, so that we are only defining Milnor classes in this context). 

To define the Fulton class of an arbitrary embeddable scheme, we first need the following

\begin{definition}
\label{scd}
Let $M$ be a smooth $\mathfrak{K}$-variety and $Y\hookrightarrow M$ a subscheme. For $Y$ regularly embedded (so that its normal cone is in fact a vector bundle, which we denote by $N_YM$), the \emph{Segre class} of $Y$ relative to $M$ is denoted $s(Y,M)$, and is defined as

\[
s(Y,M):=c(N_YM)^{-1}\cap [Y]\in A_*Y.
\]
\\ 
For $Y$ `irregularly' embedded, let $f:\widetilde{M}\to M$ be the blowup of $M$ along $Y$ and denote the exceptional divisor of $f$ by $E$. The Segre class of $Y$ relative to $M$ is then defined as

\[
s(Y,M):={\left.f\right|_E}_*s(E,\widetilde{M})\in A_*Y,
\]
\\ 
where ${\left.f\right|_E}_*$ denotes the proper pushforward of $f$ restricted to $E$. As $E$ is always regularly embedded, this is enough to define the Segre class of $Y$ (relative to $M$) in any case. 
\end{definition}

The Fulton class is then given by the following

\begin{definition} 
\label{fcd}
Let $Y$ be a subscheme of some smooth variety $M$. It's \emph{Fulton class} is denoted $c_{\text{F}}(Y)$, and is defined as

\[
c_{\text{F}}(Y):=c(TM)\cap s(Y,M)\in A_*Y.
\]
\end{definition}

\begin{remark} As shown in \cite{IntersectionTheory} (Example 4.2.6), $c_{\text{F}}(Y)$ is intrinsic to $Y$, i.e., it is independent of an embedding into some smooth variety (thus justifying the absence of an ambient $M$ anywhere in its notation). 
\end{remark}

\begin{remark} 
\label{r1}
While the Fulton class is sensitive to scheme structure, it may be shown that the CSM class of a scheme coincides with that of its support with natural reduced structure, and thus is \emph{not} sensitive to any non-trivial scheme structure. As for Milnor classes, since they are defined as the difference between the CSM and Fulton classes, they are scheme theoretic-invariants as well. More precisely, in the case of a possibly singular/non-reduced hypersurface $X$, $\mathcal{M}(X)$ is an invariant of the \emph{singular scheme} of $X$, i.e., the subscheme of $X$ whose ideal sheaf is locally generated by the partial derivatives of a local defining equation for $X$. We note that at present it is not clear what scheme structure on the singular locus of an arbitrary local complete intersection determines its Milnor class, though for a large class of global complete intersections it was shown in \cite{FMC} that the Milnor class is determined by a direct generalization of the notion of singular scheme of a hypersurface to complete intersections.
\end{remark}

\begin{remark} The L\^e-class $\Lambda(X)$ of a hypersurface $X$ mentioned in \S\ref{intro} and appearing in formulas \ref{mcf} and \ref{lcf} is essentially just a modification of the Segre class of its singular scheme $X_s$ relative to $M$, as it may be defined as\footnote{An erratum has been appended to the original version of \cite{SeadeLe} which modifies the definition of $\Lambda(X)$ as their original definition was not one that upheld formulas \ref{mcf} and \ref{lcf} as true statements. In the erratum modifications are not only made to the definition of $\Lambda(X)$, but also to the form of formulas \ref{mcf} and \ref{lcf}.  In any case, the definition given here of $\Lambda(X)$ (or any equivalent formulation) is precisely the only one which yields the original formulas \ref{mcf} and \ref{lcf} appearing in \cite{SeadeLe} as true statements.} 
\[
\Lambda(X):=c(\mathscr{O}(X))c(T^*M\otimes \mathscr{O}(X))\cap s(X_s,M)\in A_*X_s,
\]
where $T^*M$ denotes the cotangent bundle of $M$.
\end{remark}

As noted in Remark \ref{r1}, while Fulton classes are sensitive to scheme structure, they are not sensitive to the singularities of a hypersurface (or more generally a local complete intersection), since (as mentioned earlier) the Fulton class of a local complete intersection coincides with that of a smooth representative of its rational equivalence class. A scheme-theoretic characteristic class which is also sensitive to the singularities of an embeddable scheme $Y$ is the \emph{Aluffi class} of $Y$, denoted by $c_{\text{A}}(Y)$, which may be integrated to yield the Donaldson-Thomas type invariant of $Y$\footnote{Aluffi classes were first defined by Aluffi in \cite{WCM}, where he referred to them as \emph{weighted Chern-Mather classes}. Behrend then later coined the term `Aluffi class' in \cite{DTI}, where he makes the first connection between Aluffi's weighted Chern-Mather classes (albeit with a different sign convention) and Donaldson-Thomas type invariants.}. For $Y$ the singular scheme of of a hypersurface $X$ it was shown in \cite{WCM} that (up to sign) $c_{\text{A}}(Y)=c(\mathscr{O}(X))\cap \mathcal{M}(X)$, and since this is the only context in which we consider Aluffi classes we refer the reader to both \cite{DTI}\cite{WCM} for precise definitions and further discussion. 

\section{The involutions $i_{n,\mathscr{L}}$}\label{S2}
Let $X$ be an algebraic $\mathfrak{K}$-scheme. For every $(n,\mathscr{L})\in \mathbb{Z}\times \text{Pic}(X)$ we now define a map $i_{n,\mathscr{L}}:A_*X\to A_*X$, and show that it is an involutive automorphism of $A_*X$ (these will be precisely the involutions which relate various characteristic classes alluded to above). But before doing so, we first introduce two intersection theoretic operations, which will not only provide an efficient way for defining the involutions $i_{n,\mathscr{L}}$, but will also be of computational utility.

So let $\alpha\in A_*X$ be written as $\alpha=\alpha^0+\cdots +\alpha^n$, where $\alpha^i$ is the component of $\alpha$ of \emph{codimension} $i$ (in $X$). We denote by $\alpha^{\vee}$ the class
\[
\alpha^{\vee}:=\sum (-1)^i \alpha^i,
\]
and refer it it as the `dual' of $\alpha$.

We now define an action of $\text{Pic}(X)$ on $A_*X$. Given a line bundle $\mathscr{L}\to X$  we denote its action on $\alpha=\sum \alpha^i\in A_*X$ by $\alpha \otimes_X \mathscr{L}$, which we define as 

\[
\alpha \otimes_X \mathscr{L}:= \sum \frac{\alpha^i}{c(\mathscr{L})^i}.
\] 
It is straightforward to show that this defines an honest action (i.e., $(\alpha \otimes_X \mathscr{L})\otimes_X \mathscr{M}=\alpha \otimes_X (\mathscr{L}\otimes \mathscr{M})$ for any line bundles $\mathscr{L}$ and $\mathscr{M}$), and we refer to this action as `tensoring by a line bundle'. For $\mathscr{E}$ a rank $r$ class in the Grothendieck group of vector bundles on $X$ (note that $r$ may be non-positive), the formulas

\begin{equation}
\label{df}
\left(c(\mathscr{E})\cap \alpha\right)^{\vee}=c(\mathscr{E}^{\vee})\cap \alpha^{\vee}
\end{equation}
\begin{equation}
\label{tf}
\left(c(\mathscr{E})\cap \alpha\right) \otimes_X \mathscr{L}=\frac{c(\mathscr{E}\otimes \mathscr{L})}{c(\mathscr{L})^r}\cap \left(\alpha \otimes_X \mathscr{L}\right)
\end{equation}
\\ 
were proven in \cite{MR1316973} (along with the first appearance of the `tensor' and `dual' operations), and will be indispensable throughout the remainder of this note\footnote{The tensor and dual operations, along with formulas \ref{df} and \ref{tf} are what we refer to as Aluffi's `intersection-theoretic calculus' in \S\ref{intro}.}.  We now arrive at the following
 
\begin{prop}\label{inv}
Let $X$ be an algebraic $\mathfrak{K}$-scheme, $n\in \mathbb{Z}$ and $\mathscr{L}\to X$ be a line bundle. Then the map $i_{n, \mathscr{L}}:A_*X\to A_*X$ given by

\[
\alpha \mapsto c(\mathscr{L})^n\cap \left(\alpha^{\vee}\otimes_X \mathscr{L} \right)
\]
\\
is an involutive automorphism of $A_*X$ \emph{(}i.e., $i_{n, \mathscr{L}}\circ i_{n, \mathscr{L}}=\emph{id}_{A_*X}$\emph{)}.
\end{prop}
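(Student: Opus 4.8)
The plan is to observe first that $i_{n,\mathscr{L}}$ is manifestly a group homomorphism, since each of the three operations from which it is built---passing to the dual $(\cdot)^\vee$, tensoring by $\mathscr{L}$, and capping with $c(\mathscr{L})^n$---is additive on $A_*X$. Consequently it suffices to verify the single relation $i_{n,\mathscr{L}}\circ i_{n,\mathscr{L}}=\mathrm{id}_{A_*X}$: this exhibits $i_{n,\mathscr{L}}$ as its own two-sided inverse, whence it is bijective and therefore an automorphism. So the entire content of the proposition is the involutivity.

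Before computing I would record three elementary facts about the tensor and dual operations. The dual is itself an involution, $(\alpha^\vee)^\vee=\alpha$, and tensoring is a genuine action of $\text{Pic}(X)$ with the trivial bundle acting as the identity, $\alpha\otimes_X\mathscr{O}_X=\alpha$. The one non-formal ingredient is the commutation rule
\[
(\alpha\otimes_X\mathscr{L})^\vee=\alpha^\vee\otimes_X\mathscr{L}^\vee,
\]
which I would prove by a one-line check on graded pieces: the codimension-$j$ component of either side equals $(-1)^j\sum_i\binom{-i}{j-i}c_1(\mathscr{L})^{j-i}\alpha^i$. Finally, I would make the key bookkeeping observation that $c(\mathscr{L})^n=c(n\mathscr{L})$ is the total Chern class of the rank-$n$ Grothendieck class $n\mathscr{L}$, so that formulas \ref{df} and \ref{tf}---which are stated for classes of arbitrary (possibly negative) rank---apply to it with $\mathscr{E}=n\mathscr{L}$ and $r=n$.

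The computation itself then proceeds in two moves. Writing $\beta:=i_{n,\mathscr{L}}(\alpha)=c(\mathscr{L})^n\cap(\alpha^\vee\otimes_X\mathscr{L})$, I would first dualize: applying \ref{df} with $\mathscr{E}=n\mathscr{L}$ (so $\mathscr{E}^\vee=n\mathscr{L}^\vee$) together with the commutation rule and $(\alpha^\vee)^\vee=\alpha$ yields
\[
\beta^\vee=c(\mathscr{L}^\vee)^n\cap(\alpha\otimes_X\mathscr{L}^\vee).
\]
Next I would tensor by $\mathscr{L}$, applying \ref{tf} with $\mathscr{E}=n\mathscr{L}^\vee$ and $r=n$; since $n\mathscr{L}^\vee\otimes\mathscr{L}=n\mathscr{O}_X$ has trivial total Chern class, the numerator collapses to $1$ and the identity $(\alpha\otimes_X\mathscr{L}^\vee)\otimes_X\mathscr{L}=\alpha$ gives $\beta^\vee\otimes_X\mathscr{L}=c(\mathscr{L})^{-n}\cap\alpha$. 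Capping with $c(\mathscr{L})^n$ then returns $\alpha$, establishing $i_{n,\mathscr{L}}(\beta)=\alpha$.

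I expect the only real subtlety to lie in routing the prefactor $c(\mathscr{L})^n$ correctly through the dual and tensor operations. One must resist the temptation to cancel $c(\mathscr{L})^n$ against $c(\mathscr{L}^\vee)^n$ by ``duality,'' since these are genuinely different classes ($c(\mathscr{L}^\vee)=1-c_1(\mathscr{L})$ is not the inverse of $c(\mathscr{L})=1+c_1(\mathscr{L})$). The cancellation instead emerges only at the last step, and through formula \ref{tf}: it is the triviality of $\mathscr{L}^\vee\otimes\mathscr{L}$ that wipes out the numerator and leaves precisely $c(\mathscr{L})^{-n}$ to annihilate the outer $c(\mathscr{L})^n$. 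Recognizing that this is the mechanism---rather than any naive interaction of $n$ with $-n$---is the one genuinely conceptual point.
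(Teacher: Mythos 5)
Your proof is correct and is essentially the paper's own argument run in the forward direction: the paper peels the three operations off $\beta=i_{n,\mathscr{L}}(\alpha)$ one at a time (capping with $c(\mathscr{L})^{-n}$, tensoring by $\mathscr{L}^{\vee}$, then dualizing) whereas you compute $i_{n,\mathscr{L}}(\beta)$ directly (dualize, tensor, cap), but both hinge on the identical applications of formulas \ref{df} and \ref{tf} to the rank-$\pm n$ Grothendieck classes $\pm n\mathscr{L}$, with the cancellation produced in both cases by the triviality of $\mathscr{L}^{\vee}\otimes\mathscr{L}$ collapsing the numerator in \ref{tf}. Your one addition is to state and verify the commutation rule $(\alpha\otimes_X\mathscr{L})^{\vee}=\alpha^{\vee}\otimes_X\mathscr{L}^{\vee}$, which the paper uses tacitly at its final dualization step --- a worthwhile bit of explicitness, since that identity belongs to Aluffi's calculus but is not among the two formulas the paper quotes.
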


\begin{proof} Let $\alpha\in A_*X$ and denote $i_{n, \mathscr{L}}(\alpha)$ by $\beta$, i.e.,

\begin{equation}
\label{be}
\beta=c(\mathscr{L})^n\cap \left(\alpha^{\vee}\otimes_X \mathscr{L} \right).
\end{equation}
We will show that $i_{n, \mathscr{L}}(\beta)=\alpha$, which implies the conclusion of the proposition. Capping both sides of the equation \ref{be} by $c(\mathscr{L})^{-n}$ we get

\begin{equation}
\label{ce}
c(\mathscr{L})^{-n}\cap \beta=\alpha^{\vee}\otimes_X \mathscr{L}.
\end{equation}
\\
By formula \ref{tf}, for any line bundle $\mathscr{M}\to X$ we have

\[
\left( c(\mathscr{L})^{-n}\cap \beta \right)\otimes_X \mathscr{M}=\frac{c(\mathscr{M})^n}{c(\mathscr{L}\otimes \mathscr{M})^n}\cap (\beta\otimes_X \mathscr{M}),
\]
thus tensoring both sides of equation \ref{ce} by $\mathscr{L}^{\vee}$ yields

\begin{equation}
\label{le}
c(\mathscr{L}^{\vee})^n\cap \left(\beta\otimes_X \mathscr{L}^{\vee}\right)=\alpha^{\vee}.
\end{equation}
\\
Finally, taking the `dual' (i.e. applying formula \ref{df}) to both sides of equation \ref{le} we have

\[
\alpha=c(\mathscr{L})^n\cap \left(\beta^{\vee}\otimes_X \mathscr{L}\right)=i_{n, \mathscr{L}}(\beta),
\]
as desired.

The fact that $i_{n, \mathscr{L}}$ is a homomorphism (i.e. $\mathbb{Z}$-linear) follows from the fact that dualizing, tensoring by a line bundle and capping with Chern classes are all linear operations.
\end{proof}

\begin{remark}
The map $\alpha\mapsto \alpha^{\vee}$ sending a class to its dual coincides with $i_{n,\mathscr{O}}$ for every $n\in \mathbb{Z}$. 
\end{remark}

\section{Symmetric formulas abound}\label{S3} We now assume $M$ is a smooth proper $\mathfrak{K}$-variety and $X\subset M$ is an arbitrary hypersurface\footnote{By `hypersurface' we mean the zero-\emph{scheme} associated with a generic section of line bundle on $M$.} with singular scheme $X_s$. In what follows, as we prefer to work mostly in $M$, we will not distinguish between classes in $A_*X$ and their pushforwards (via the natural inclusion) to $A_*M$. We will call two classes $k$ -$\mathscr{L}$ \emph{dual} if one is the image of the other (and so vice-versa) under the map $i_{k,\mathscr{L}}$. We show formulas \ref{mcf} and \ref{lcf} are consequences of the fact that $\mathcal{M}(X)$ and $\Lambda(X)$ are simply $\text{dim}(M)$-$\mathscr{O}(X)$ dual, and show how other classes mentioned in \S\ref{S1} have similar expressions in terms of `dual partners'. 

Everything here is essentially an application of Proposition \ref{inv} and formulas \ref{df} and \ref{tf} to Aluffi's formula for the CSM class of $X$:

\begin{theorem}[Aluffi, \cite{MR1697199}]

\[
c_{\emph{SM}}(X)=\frac{c(TM)}{c(\mathscr{O}(X))}\cap\left([X]+s(X_s,M)^{\vee}\otimes_M \mathscr{O}(X)\right).
\]
\vspace{0.1in}
\end{theorem}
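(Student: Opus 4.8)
The plan is to establish Aluffi's formula by applying the Chern--Schwartz--MacPherson natural transformation $c_*$ to the indicator function $\mathbf{1}_X$ and then extracting the Segre class of the singular scheme $X_s$ from the differential of a defining section. Recall $c_{\text{SM}}(X)=c_*(\mathbf{1}_X)$, and write $\mathscr{L}:=\mathscr{O}(X)$, so that a local equation $F$ for $X$ globalizes to a section of $\mathscr{L}$ whose first principal part $d^1F$ is a section of the bundle $P^1_M(\mathscr{L})$ of principal parts, with zero scheme exactly $X_s$; this bundle carries a filtration with graded pieces $\mathscr{L}$ and $T^*M\otimes\mathscr{L}$. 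The first term of the asserted formula is harmless bookkeeping: since $X$ is a Cartier divisor one has $s(X,M)=[X]/c(\mathscr{L})$, whence $\tfrac{c(TM)}{c(\mathscr{L})}\cap[X]=c(TM)\cap s(X,M)=c_{\text{F}}(X)$. The entire content of the theorem is therefore the identification of the Milnor-type correction $c_{\text{SM}}(X)-c_{\text{F}}(X)$ with $\tfrac{c(TM)}{c(\mathscr{L})}\cap\bigl(s(X_s,M)^{\vee}\otimes_M\mathscr{L}\bigr)$, and so I would isolate that correction from the outset.

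The geometric heart of the argument is to compute $c_*(\mathbf{1}_X)$ from the characteristic (Lagrangian) cycle of $X$ in $T^*M$, whose projectivization is the conormal variety of $X$ --- equivalently, the closure in $\mathbb{P}(T^*M)$ of the graph of the rational map $[dF]$, whose indeterminacy locus is precisely $X_s$. The plan is to realize this closure by blowing up $M$ along $X_s$ as in Definition \ref{scd}: the exceptional divisor $E$ and the identity $s(X_s,M)={\left.f\right|_E}_* s(E,\widetilde{M})$ are exactly what feed the conormal cycle over the singular locus. Reading off $c_{\text{SM}}(X)$ as the ``shadow'' of this cycle --- that is, capping with Chern classes of the twisted cotangent bundle $T^*M\otimes\mathscr{L}$ and pushing forward to $M$ --- then produces a ``raw'' expression for $c_{\text{SM}}(X)$ in terms of $c(T^*M\otimes\mathscr{L})$ and the Segre class $s(X_s,M)$.

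With the raw formula in hand, the remaining work is purely formal, and this is where the calculus of \S\ref{S2} enters. The factor $c(T^*M\otimes\mathscr{L})$ coming from the cotangent twist is removed using the tensor formula \ref{tf} with $\mathscr{E}=T^*M$ and $r=\dim M$, while the passage from $T^*M$ back to $TM$ together with the conormal sign is handled by the dual formula \ref{df} via $T^*M=(TM)^{\vee}$. The net effect is to collapse all of this data into the single clean term $s(X_s,M)^{\vee}\otimes_M\mathscr{L}$ capped with $c(TM)/c(\mathscr{L})$, yielding the stated shape.

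The hard part will be the second step: proving that the part of $c_*(\mathbf{1}_X)$ supported over $X_s$ is governed exactly by $s(X_s,M)$. Whether one argues through the characteristic-cycle/shadow correspondence, or instead through an embedded resolution $\pi:\widetilde{M}\to M$ of $X$ combined with MacPherson functoriality $\pi_*c_*(\mathbf{1}_{\pi^{-1}X})=c_*(\pi_*\mathbf{1}_{\pi^{-1}X})$, the delicate point is identifying the Euler-characteristic-of-fibres correction (a Milnor-fibre contribution concentrated on the singular locus) with the Segre class of $X_s$. Everything on either side of this identification is either the standard normalization --- the smooth case, where $X_s=\varnothing$ recovers $c_{\text{SM}}(X)=c(TX)\cap[X]$ --- or the formal manipulations above. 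I therefore expect essentially all of the genuine labour to be concentrated in this Segre-class matching, and none in the passage to the final symmetric form.
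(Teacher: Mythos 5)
You should first note that the paper offers no proof of this statement at all: it is imported verbatim as Aluffi's theorem, with \cite{MR1697199} as the reference, and everything in \S\ref{S3} is downstream of it. So there is no internal argument to compare against, and your sketch has to be measured against Aluffi's actual proof. Your bookkeeping is correct as far as it goes: $s(X,M)=c(\mathscr{O}(X))^{-1}\cap[X]$ for the Cartier divisor $X$, so the first term is exactly $c_{\text{F}}(X)$; the filtration of $P^1_M(\mathscr{L})$ with graded pieces $\mathscr{L}$ and $T^*M\otimes\mathscr{L}$ is right; and your claim that the passage from a ``raw'' expression in $c(T^*M\otimes\mathscr{L})\cap s(X_s,M)$ to the stated form is pure formal calculus via \ref{df} and \ref{tf} is accurate --- it is literally the same manipulation that Theorem \ref{mcs} of this paper performs in reverse.

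The genuine gap is that the one step carrying all the content is asserted rather than proved. The identification of the correction term supported on $X_s$ with $\tfrac{c(TM)}{c(\mathscr{O}(X))}\cap\bigl(s(X_s,M)^{\vee}\otimes_M\mathscr{O}(X)\bigr)$ --- equivalently, the statement that the shadow of the relevant cycle computes $c_*(\mathbf{1}_X)$ --- is not an input one can invoke; it is essentially a reformulation of the theorem itself, and in Aluffi's paper it is established by a different and laborious mechanism: resolution of singularities reduces to the normal crossings case, and the heart is a blow-up lemma showing that both sides of the formula transform the same way under a single blowup along a smooth center inside $X_s$, where the behavior of Segre classes under blowups (Definition \ref{scd}) does the real work. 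Your alternative route via $\pi_*c_*(\mathbf{1}_{\pi^{-1}X})=c_*(\pi_*\mathbf{1}_{\pi^{-1}X})$ faces the same difficulty, since $\pi_*\mathbf{1}_{\pi^{-1}X}$ is an Euler-characteristic-weighted constructible function, not $\mathbf{1}_X$, and controlling that weight against the Segre class is precisely the delicate point you flag but do not resolve. A further imprecision: the cycle whose shadow is relevant is not the Lagrangian conormal cycle of $X$ but the closure of the graph of the projectivized differential over \emph{all} of $M$ inside $\mathbb{P}(P^1_M(\mathscr{L}))$ --- i.e., the blowup of $M$ along $X_s$, of dimension $\dim M$ --- and relating its components over $X_s$ to characteristic-cycle data (Milnor-fibre Euler characteristics, \`a la Parusi\'nski--Pragacz) is additional nontrivial input on that route. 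In short: you have written a correct and well-oriented roadmap that locates the difficulty exactly where it lies, but the located step is the entire theorem, and it remains unproven in your proposal.
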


We then immediately arrive at the following

\begin{corol}
\label{mcf2}
\[
\mathcal{M}(X)=\frac{c(TM)}{c(\mathscr{O}(X))}\cap\left(s(X_s,M)^{\vee}\otimes_M \mathscr{O}(X)\right).
\]
\vspace{0.1in}
\end{corol}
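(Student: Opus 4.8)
The plan is to unwind the definition $\mathcal{M}(X) := c_{\text{SM}}(X) - c_{\text{F}}(X)$ and observe that the Fulton class accounts for exactly the $[X]$ term in Aluffi's formula, so that the two cancel. First I would compute the Fulton class of the hypersurface $X$ directly from Definitions \ref{scd} and \ref{fcd}. Since $X\subset M$ is a hypersurface, i.e. an effective Cartier divisor, it is regularly embedded with normal bundle $N_X M = \mathscr{O}(X)|_X$; hence by Definition \ref{scd} its Segre class is
\[
s(X,M) = c(\mathscr{O}(X))^{-1} \cap [X] = \frac{[X]}{c(\mathscr{O}(X))},
\]
and Definition \ref{fcd} then gives
\[
c_{\text{F}}(X) = c(TM) \cap s(X,M) = \frac{c(TM)}{c(\mathscr{O}(X))} \cap [X].
\]

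Next I would substitute this together with Aluffi's formula for $c_{\text{SM}}(X)$ (the Theorem above) into the difference. Both classes carry the common operator $\frac{c(TM)}{c(\mathscr{O}(X))}\cap(-)$, and the argument $[X]$ appearing in Aluffi's formula is precisely the class that produces $c_{\text{F}}(X)$. Thus
\[
\mathcal{M}(X) = \frac{c(TM)}{c(\mathscr{O}(X))}\cap\left([X] + s(X_s,M)^{\vee} \otimes_M \mathscr{O}(X)\right) - \frac{c(TM)}{c(\mathscr{O}(X))}\cap[X],
\]
and after cancelling the two $[X]$ contributions (using linearity of capping with the fixed class $\frac{c(TM)}{c(\mathscr{O}(X))}$) the stated formula drops out immediately.

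There is no genuine obstacle here: the only nontrivial inputs are Aluffi's theorem itself, which we are permitted to assume, and the computation of $c_{\text{F}}(X)$, which rests solely on the fact that a hypersurface is a regularly embedded divisor with normal bundle $\mathscr{O}(X)|_X$. The remaining manipulation is purely formal, so the corollary is truly immediate. The single point one must be careful about is that the $[X]$ term in Aluffi's formula and the Fulton class $c_{\text{F}}(X)$ are being identified as the \emph{same} class in $A_*M$, which is exactly the content of the displayed expression for $c_{\text{F}}(X)$ above.
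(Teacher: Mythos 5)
Your proposal is correct and is essentially the paper's own proof: the paper likewise computes $c_{\text{F}}(X)=c(TM)\cap s(X,M)=\frac{c(TM)}{c(\mathscr{O}(X))}\cap [X]$ from the regular embedding of the hypersurface and subtracts this from Aluffi's formula, with the $[X]$ terms cancelling. You have merely spelled out the same one-line argument in slightly more detail.
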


\begin{proof}
This follows directly from definitions of Fulton class and Milnor class, as $\mathcal{M}(X)=c_{\text{SM}}(X)-c_{\text{F}}(X)$ and $c_{\text{F}}(X)=c(TM)\cap s(X,M)=\frac{c(TM)}{c(\mathscr{O}(X))}\cap [X]$. 
\end{proof}

Formulas \ref{mcf} and \ref{lcf} (i.e., the main result of \cite{SeadeLe}) are then a special case of the following

\begin{theorem} 
\label{mcs}
Let $n$ be an integer. Then 

\[
\mathcal{M}(X)=i_{n,\mathscr{O}(X)}(\alpha_X(n)) \quad \text{and} \hspace{0.15in}\alpha_X(n)=i_{n,\mathscr{O}(X)}(\mathcal{M}(X)),
\]
where
\[
\alpha_X(n):=c(T^*M\otimes \mathscr{O}(X))c(\mathscr{O}(X))^{n+1-\emph{dim}(M)}\cap s(X_s,M).
\]
\end{theorem}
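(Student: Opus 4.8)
The plan is to exploit Proposition~\ref{inv}: since $i_{n,\mathscr{O}(X)}$ is an involution, the two displayed equalities are logically equivalent, so it suffices to verify just one of them. I would establish the second, $\alpha_X(n)=i_{n,\mathscr{O}(X)}(\mathcal{M}(X))$, by the dual route of computing $i_{n,\mathscr{O}(X)}(\alpha_X(n))$ directly and checking that it reproduces the expression for $\mathcal{M}(X)$ supplied by Corollary~\ref{mcf2}. Throughout I would abbreviate $\mathscr{L}:=\mathscr{O}(X)$ and $m:=\dim M$, and keep at hand the identity $\mathcal{M}(X)=\frac{c(TM)}{c(\mathscr{L})}\cap\left(s(X_s,M)^{\vee}\otimes_M\mathscr{L}\right)$ from Corollary~\ref{mcf2}.

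Unwinding the definition, $i_{n,\mathscr{L}}(\alpha_X(n))=c(\mathscr{L})^n\cap\left(\alpha_X(n)^{\vee}\otimes_M\mathscr{L}\right)$, so the first step is to dualize $\alpha_X(n)$. Writing the Chern factor $c(T^*M\otimes\mathscr{L})\,c(\mathscr{L})^{n+1-m}$ as $c(\mathscr{E})$ for the virtual bundle $\mathscr{E}=T^*M\otimes\mathscr{L}+(n+1-m)\mathscr{L}$ and applying formula~\ref{df}, the dual becomes $c(\mathscr{E}^{\vee})\cap s(X_s,M)^{\vee}$ with $\mathscr{E}^{\vee}=TM\otimes\mathscr{L}^{\vee}+(n+1-m)\mathscr{L}^{\vee}$. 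The second step is to tensor by $\mathscr{L}$ via formula~\ref{tf}. Here $\mathscr{E}^{\vee}\otimes\mathscr{L}$ simplifies drastically, since $TM\otimes\mathscr{L}^{\vee}\otimes\mathscr{L}=TM$ and $\mathscr{L}^{\vee}\otimes\mathscr{L}=\mathscr{O}$, so the numerator collapses to $c(TM)$; the denominator of~\ref{tf} contributes $c(\mathscr{L})^{-(n+1)}$. Capping with the prefactor $c(\mathscr{L})^n$ then leaves $\frac{c(TM)}{c(\mathscr{L})}\cap\left(s(X_s,M)^{\vee}\otimes_M\mathscr{L}\right)$, which is exactly the Corollary~\ref{mcf2} expression for $\mathcal{M}(X)$, completing the verification.

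The only real subtlety — and the step I would treat most carefully — is the rank bookkeeping in formula~\ref{tf}. The class $\alpha_X(n)$ is rigged so that, after dualizing, the virtual bundle $\mathscr{E}^{\vee}$ has rank exactly $n+1$; tensoring by $\mathscr{L}$ therefore produces the denominator $c(\mathscr{L})^{n+1}$, while the $(n+1-m)$ copies of $\mathscr{L}^{\vee}$ become $\mathscr{O}$ and silently leave the numerator equal to $c(TM)$. It is precisely the combination of this $c(\mathscr{L})^{-(n+1)}$ with the prefactor $c(\mathscr{L})^n$ coming from the involution that collapses all $n$-dependence to a single $c(\mathscr{L})^{-1}$, so confirming that these powers cancel exactly is the crux of the argument. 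Once that is in place, to recover the symmetric formulas~\ref{mcf} and~\ref{lcf} of \cite{SeadeLe} I would specialize to $n=m$, where $\alpha_X(m)=c(\mathscr{L})\,c(T^*M\otimes\mathscr{L})\cap s(X_s,M)=\Lambda(X)$; the relation $\mathcal{M}(X)=i_{m,\mathscr{L}}(\Lambda(X))$ then unpacks, component by component, into the binomial identities~\ref{mcf} and~\ref{lcf}.
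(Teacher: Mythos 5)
Your proposal is correct and is essentially the paper's own argument: the paper likewise starts from Corollary \ref{mcf2}, applies formulas \ref{tf} and \ref{df} with the same rank-$(n+1)$ virtual bundle bookkeeping, and invokes Proposition \ref{inv} to deduce the second equality from the first. The only difference is direction of travel --- the paper massages $\mathcal{M}(X)$ into $i_{n,\mathscr{O}(X)}(\alpha_X(n))$ by inserting a factor $c(\mathscr{O})^{n+1-\dim M}=1$ and reading \ref{tf} and \ref{df} in reverse, whereas you expand $i_{n,\mathscr{O}(X)}(\alpha_X(n))$ forward until it matches Corollary \ref{mcf2}; these are the same chain of equalities read in opposite orders.
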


\begin{proof}
By Corollary \ref{mcf2} we have

\begin{eqnarray*}
\mathcal{M}(X)&=&\frac{c(TM)}{c(\mathscr{O}(X))}\cap\left(s(X_s,M)^{\vee}\otimes_M \mathscr{O}(X)\right) \\
							&=&c(\mathscr{O}(X))^n\cap \left(\frac{c(TM)c(\mathscr{O})^{n+1-\text{dim}(M)}}{c(\mathscr{O}(X))^{n+1}}\cap \left(s(X_s,M)^{\vee}\otimes_M \mathscr{O}(X)\right)\right) \\
							&\overset{\ref{tf}}=&c(\mathscr{O}(X))^n\cap \left(\left(c(TM\otimes \mathscr{O}(-X))c(\mathscr{O}(-X))^{n+1-\text{dim}(M)}\cap s(X_s,M)^{\vee}\right)\otimes_M \mathscr{O}(X)\right) \\
							&\overset{\ref{df}}=&c(\mathscr{O}(X))^n \cap \left(\left(c(TM^*\otimes \mathscr{O}(X))c(\mathscr{O}(X))^{n+1-\text{dim}(M)}\cap s(X_s,M)\right)^{\vee}\otimes_M \mathscr{O}(X)\right) \\
							&=& i_{n,\mathscr{O}(X)}(\alpha_X(n)).
\end{eqnarray*}
The formula $\alpha_X(n)=i_{n,\mathscr{O}(X)}(\mathcal{M}(X))$ then follows as $i_{n,\mathscr{O}(X)}$ is an involution by Proposition \ref{inv}.
\end{proof}

\begin{remark}
In \cite{SSH}, the class $c(T^*M\otimes \mathscr{O}(X))\cap s(X_s,M)$ was taken as the definition of a class referred to as the $\mu$-\emph{class} of $X_s$ (as it generalized Parusi\`{n}ski's `$\mu$-number' \cite{PMN}), which was denoted $\mu_{\mathscr{O}(X)}(X_s)$. Thus 

\[
\alpha_X(n)=c(\mathscr{O}(X))^{n+1-\text{dim}(M)}\cap \mu_{\mathscr{O}(X)}(X_s) 
\]
and
\[
\mathcal{M}(X)=i_{\text{dim}(X),\mathscr{O}(X)}(\mu_{\mathscr{O}(X)}(X_s))
\]
by Theorem \ref{mcs}. Moreover, it was shown in \cite{SSH} that the $\mu$-class of the singular scheme of a hypersurface is intrinsic to the singular scheme, i.e., if $X_s$ coincided with the singular scheme of another hypersurface embedded in a different smooth variety the class remains unchanged. Applications of $\mu$-classes to the study of dual varieties varieties and contact schemes of hypersurfaces were also considered in \cite{SSH}.   

\end{remark}

\begin{remark}As $n$ varies over $\mathbb{Z}$, writing out the formula for the $k$th dimensional piece $\mathcal{M}_k(X)$ of the Milnor class of $X$ via Theorem \ref{mcs} yields infinitely many symmetric formulas similar to \ref{mcf} and \ref{lcf}. Moreover, $\alpha_X(\text{dim}(M))=\Lambda(X)$ (the L\^{e} class of $X$), which implies formulas \ref{mcf} and \ref{lcf}, as we now show: 
\end{remark}

\begin{corol}[\cite{SeadeLe}]
Formulas \ref{mcf} and \ref{lcf} hold.
\end{corol}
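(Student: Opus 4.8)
The plan is to show that formulas \ref{mcf} and \ref{lcf} fall out of Theorem \ref{mcs} by specializing $n=\text{dim}(M)$ and then unwinding the definitions of the dual and tensor operations into explicit component-by-component sums. First I would observe that, as noted in the preceding remark, setting $n=\text{dim}(M)$ in the definition of $\alpha_X(n)$ gives
\[
\alpha_X(\text{dim}(M))=c(T^*M\otimes \mathscr{O}(X))c(\mathscr{O}(X))\cap s(X_s,M),
\]
which is precisely the displayed definition of the L\^e-class $\Lambda(X)$ from Remark in \S\ref{S1}. Hence Theorem \ref{mcs} with this value of $n$ reads $\mathcal{M}(X)=i_{\text{dim}(M),\mathscr{O}(X)}(\Lambda(X))$ and $\Lambda(X)=i_{\text{dim}(M),\mathscr{O}(X)}(\mathcal{M}(X))$, so the two formulas to be established are simply the two halves of this single involutive identity written out in coordinates.

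Next I would expand the involution $i_{\text{dim}(M),\mathscr{O}(X)}$ applied to a class $\gamma$ componentwise. Writing $L:=c_1(\mathscr{O}(X))$ and decomposing $\gamma$ into its codimension-$i$ (equivalently, fixed-dimension) pieces, the dual operation contributes a sign $(-1)^i$, the tensor operation $\otimes_M\mathscr{O}(X)$ divides the codimension-$i$ part by $(1+L)^i$, and the prefactor $c(\mathscr{O}(X))^{\text{dim}(M)}=(1+L)^{\text{dim}(M)}$ multiplies everything back. The hard part will be bookkeeping the binomial coefficients: after collecting the power of $L$ that acts on a given graded piece and re-indexing by the target dimension $k$ and the number $j$ of extra factors of $L$, the coefficient of $L^j\cap\gamma_{j+k}$ must be checked to equal $(-1)^{j+k}\binom{j+k}{k}$ so as to match the right-hand sides of \ref{mcf} and \ref{lcf} exactly. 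This amounts to verifying a single binomial identity of the form $\binom{-i}{j}=(-1)^j\binom{i+j-1}{j}$ arising from expanding $1/(1+L)^i$, reconciled with the sign from dualization and the overall degree shift.

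The one genuine subtlety is matching conventions: formulas \ref{mcf} and \ref{lcf} are indexed by the \emph{dimension} $k$ of a class and run the sum up to $d-k$, where $d=\dim X_s$, whereas the involution is most naturally phrased in terms of codimension in $M$; I would therefore fix the translation between dimension in $X$ (or $X_s$) and codimension in $M$ at the outset and confirm that the upper limit $d-k$ is automatic because $s(X_s,M)$, and hence $\Lambda(X)$ and $\mathcal{M}(X)$, are supported in dimensions at most $d$. Once the indexing is aligned, both \ref{mcf} and \ref{lcf} are literally the two graded expansions of $\mathcal{M}(X)=i_{\text{dim}(M),\mathscr{O}(X)}(\Lambda(X))$ and its inverse, and since $i_{\text{dim}(M),\mathscr{O}(X)}$ is an involution by Proposition \ref{inv}, one expansion forces the other. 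I expect the entire argument to reduce, modulo this indexing translation, to the binomial coefficient check, with no further geometric input required beyond Theorem \ref{mcs}.
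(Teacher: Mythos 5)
Your proposal is correct and takes essentially the same route as the paper: specialize Theorem \ref{mcs} so that $\alpha_X(n)=\Lambda(X)$, expand the involution componentwise, reindex by dimension to extract the coefficient $(-1)^{j+k}\binom{j+k}{k}$ of $c_1(\mathscr{O}(X))^j\cap\Lambda_{j+k}(X)$ (with the residual global sign absorbed, as the paper notes, into the differing Milnor-class sign convention of \cite{SeadeLe}), and obtain \ref{lcf} from \ref{mcf} via involutivity (Proposition \ref{inv}). The only simplification worth noting is that your anticipated negative-binomial identity $\binom{-i}{j}=(-1)^j\binom{i+j-1}{j}$ is never needed: the prefactor $c(\mathscr{O}(X))^{n}$ cancels all the denominators $c(\mathscr{O}(X))^{i}$ coming from $\otimes_M\mathscr{O}(X)$, leaving only nonnegative powers $(1+c_1(\mathscr{O}(X)))^{n-i}$, so the ordinary binomial theorem suffices, exactly as in the paper's computation.
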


\begin{proof}
Denote the dimension of $X$ by $d$. By Theorem \ref{mcs},

\begin{eqnarray*}
\mathcal{M}(X)&=&i_{d,\mathscr{O}(X)}(\alpha_X(d)) \\
              &=&i_{d,\mathscr{O}(X)}(\Lambda(X)) \\
							&=&c(\mathscr{O}(X))^d\cap \left(\Lambda(X)^{\vee}\otimes_M \mathscr{O}(X)\right) \\
							&=&c(\mathscr{O}(X))^d\cap \left(\sum_{i=0}^{d}\frac{(-1)^i\Lambda_{d-i}(X)}{c(\mathscr{O}(X))^i}\right) \\
							&=&\sum_{i=0}^{d}(-1)^ic(\mathscr{O}(X))^{d-i}\cap \Lambda_{d-i}(X) \\
							&=&\sum_{i=0}^{d}(-1)^i(1+c_1(\mathscr{O}(X)))^{d-i}\cap \Lambda_{d-i}(X) \\
							&=&\sum_{i=0}^d\sum_{j\geq 0}(-1)^i\left(\begin{array}{c} d-i \\ j \end{array}\right)c_1(\mathscr{O}(X))^j\cap \Lambda_{d-i}(X). \\
\end{eqnarray*}

In the last equality the term $c_1(\mathscr{O}(X))^j\cap \Lambda_{d-i}(X)$ is of dimension $d-i-j$, and so $\mathcal{M}_k(X)$ corresponds to setting $i=d-k-j$, which yields

\[
\mathcal{M}_k(X)=\sum_{j\geq 0}(-1)^{d-k-j}\left(\begin{array}{c} j+k \\ j \end{array}\right)c_1(\mathscr{O}(X))^j\cap \Lambda_{j+k}(X),
\]
which is equivalent (up to sign) to formula \ref{mcf} via the identity $\left(\begin{array}{c} a+b \\ a \end{array}\right)=\left(\begin{array}{c} a+b \\ b \end{array}\right)$. The (possible) disparity in sign comes from the fact that in \cite{SeadeLe} their definition of Milnor class differs from ours by a factor of $(-1)^d$. Formula \ref{lcf} then immediately follows as $\mathcal{M}(X)$ and $\Lambda(X)$ are $d$-$\mathscr{O}(X)$ dual.
\end{proof}

\begin{remark}
We note that it was much more work to write out formulas for the individual components $\mathcal{M}_k(X)$ than that of the total Milnor class $\mathcal{M}(X)$ (as in Theorem \ref{mcs}). And this is a general principle when computing characteristic classes, i.e., it is often simpler to compute a \emph{total} class rather than its individual components.
\end{remark} 

\begin{remark}
As mentioned in \S\ref{S1}, in \cite{WCM} Aluffi defined a scheme-theoretic characteristic class for arbitrary embeddable $\mathfrak{K}$-schemes which Behrend refers to as the `Aluffi class' in his theory of Donaldson-Thomas type invariants \cite{DTI}. The analogue of the Gau{\ss}-Bonnet theorem in this theory is the formula

\[
\int_Y c_{\text{A}}(Y)=\chi_{\text{DT}}(Y),
\]
\\  
where $Y$ is an embeddable scheme with Aluffi class $c_{\text{A}}(Y)$, and $\chi_{\text{DT}}(Y)$ denotes the Donaldson-Thomas type invariant of $Y$. If $Y$ is the singular scheme of a hypersurface $X$ it was shown in \cite{WCM} that

\[
c_{\text{A}}(Y)=c(\mathscr{O}(X))\cap \mathcal{M}(X).
\]
\\  
Thus capping both sides of the formulas constituting Theorem \ref{mcs} with $c(\mathscr{O}(X))$ then yields
\end{remark}

\begin{corol} Let $n$ be an integer, $Y$ be the singular scheme of a hypersurface $X$ and let $\alpha_X(n)$ be defined as in Theorem \ref{mcs}. Then

\[
c_{\emph{A}}(Y)=i_{n+1,\mathscr{O}(X)}(\alpha_X(n)) \quad \text{and} \hspace{0.15in} \alpha_X(n)=i_{n+1,\mathscr{O}(X)}(c_{\text{A}}(Y)).
\]
\end{corol}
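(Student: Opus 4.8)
The plan is to leverage the two facts the author has already set up: the identity $c_{\text{A}}(Y)=c(\mathscr{O}(X))\cap \mathcal{M}(X)$ from \cite{WCM}, and the pair of relations $\mathcal{M}(X)=i_{n,\mathscr{O}(X)}(\alpha_X(n))$, $\alpha_X(n)=i_{n,\mathscr{O}(X)}(\mathcal{M}(X))$ from Theorem \ref{mcs}. The essential observation is that capping with $c(\mathscr{L})$ simply shifts the index $n$ of the involution by one. Concretely, I would first verify the elementary identity
\[
c(\mathscr{L})\cap i_{n,\mathscr{L}}(\beta)=i_{n+1,\mathscr{L}}(\beta),
\]
valid for every class $\beta\in A_*X$ and every line bundle $\mathscr{L}$. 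This holds because $i_{n,\mathscr{L}}(\beta)=c(\mathscr{L})^n\cap(\beta^{\vee}\otimes_X\mathscr{L})$, and multiplying by one more factor of $c(\mathscr{L})$ turns $c(\mathscr{L})^n$ into $c(\mathscr{L})^{n+1}$ while leaving $\beta^{\vee}\otimes_X\mathscr{L}$ untouched, which is exactly the definition of $i_{n+1,\mathscr{L}}(\beta)$.

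Granting this shift identity, the corollary is nearly immediate. Starting from the first relation in Theorem \ref{mcs}, $\mathcal{M}(X)=i_{n,\mathscr{O}(X)}(\alpha_X(n))$, I would cap both sides with $c(\mathscr{O}(X))$. The left-hand side becomes $c(\mathscr{O}(X))\cap\mathcal{M}(X)=c_{\text{A}}(Y)$ by the result of \cite{WCM}, and the right-hand side becomes $i_{n+1,\mathscr{O}(X)}(\alpha_X(n))$ by the shift identity with $\mathscr{L}=\mathscr{O}(X)$ and $\beta=\alpha_X(n)$. This yields the first stated formula $c_{\text{A}}(Y)=i_{n+1,\mathscr{O}(X)}(\alpha_X(n))$. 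The second formula $\alpha_X(n)=i_{n+1,\mathscr{O}(X)}(c_{\text{A}}(Y))$ then follows formally, since $i_{n+1,\mathscr{O}(X)}$ is an involution by Proposition \ref{inv}: applying it to both sides of the first formula and using $i_{n+1,\mathscr{O}(X)}\circ i_{n+1,\mathscr{O}(X)}=\text{id}$ inverts the relation.

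The only real content is the shift identity, and even that is a one-line unwinding of the definition of $i_{n,\mathscr{L}}$; so I do not expect any genuine obstacle. The single point requiring a moment's care is confirming that capping with $c(\mathscr{L})=1+c_1(\mathscr{L})$ affects only the $c(\mathscr{L})^n$ prefactor and commutes harmlessly past the dual and tensor operations in the sense needed, but this is transparent once one writes $i_{n,\mathscr{L}}(\beta)$ out in full. Thus the proof is essentially the verbatim computation the author indicates in the preceding remark: cap Theorem \ref{mcs} with $c(\mathscr{O}(X))$ and read off the index shift.
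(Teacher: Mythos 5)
Your proposal is correct and is essentially the paper's own argument: the paper derives the corollary precisely by capping both formulas of Theorem \ref{mcs} with $c(\mathscr{O}(X))$, invoking $c_{\text{A}}(Y)=c(\mathscr{O}(X))\cap\mathcal{M}(X)$ from \cite{WCM}, with the index shift $c(\mathscr{L})\cap i_{n,\mathscr{L}}(\beta)=i_{n+1,\mathscr{L}}(\beta)$ implicit in the definition of $i_{n,\mathscr{L}}$. Your explicit isolation of the shift identity and your use of Proposition \ref{inv} to obtain the second formula merely make precise what the paper leaves as a one-line remark.
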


We conclude this section by identifying the `$n$-$\mathscr{O}(X)$ dual partners' of the CSM class of $X$:

\begin{theorem}
Let $n$ be an integer. Then 

\[
c_{\emph{SM}}(X)=i_{n,\mathscr{O}(X)}(\nu_X(n)+\alpha_X(n)) \quad \text{and} \hspace{0.15in}\nu_X(n)+\alpha_X(n)=i_{n,\mathscr{O}(X)}(c_{\emph{SM}}(X)),
\]
\\
where

\[
\nu_X(n)=c(T^*M\otimes \mathscr{O}(X))c(\mathscr{O}(X))^{n-\emph{dim}(M)}\cap -[X]
\]
\\
and $\alpha_X(n)$ is as defined in Theorem \ref{mcs}.
\end{theorem}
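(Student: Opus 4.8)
The plan is to exploit the additive decomposition $c_{\text{SM}}(X) = c_{\text{F}}(X) + \mathcal{M}(X)$, which is immediate from the definition $\mathcal{M}(X) = c_{\text{SM}}(X) - c_{\text{F}}(X)$ (equivalently, from comparing Aluffi's formula with Corollary \ref{mcf2}). Since $i_{n,\mathscr{O}(X)}$ is $\mathbb{Z}$-linear and involutive by Proposition \ref{inv}, it suffices to identify the $n$-$\mathscr{O}(X)$ dual partner of each summand separately. Theorem \ref{mcs} already supplies $\alpha_X(n) = i_{n,\mathscr{O}(X)}(\mathcal{M}(X))$, so the only new ingredient needed is the claim that $\nu_X(n) = i_{n,\mathscr{O}(X)}(c_{\text{F}}(X))$ (equivalently $c_{\text{F}}(X) = i_{n,\mathscr{O}(X)}(\nu_X(n))$).

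To establish this I would run the same three-step manipulation used in the proof of Theorem \ref{mcs}, but applied to the Fulton class $c_{\text{F}}(X) = \frac{c(TM)}{c(\mathscr{O}(X))}\cap[X]$ in place of the Milnor class. Writing $c_{\text{F}}(X) = c(TM - \mathscr{O}(X))\cap[X]$ with $TM - \mathscr{O}(X)$ a rank-$(\dim M - 1)$ Grothendieck-group class, I would compute $i_{n,\mathscr{O}(X)}(c_{\text{F}}(X))$ directly by: (i) taking the dual via formula \ref{df}, using that $[X]$ has codimension one in $M$ so that $[X]^{\vee} = -[X]$ --- this is the step that produces the explicit minus sign in $\nu_X(n)$; (ii) tensoring by $\mathscr{O}(X)$ via formula \ref{tf}, which turns the cotangent factor into $c(T^*M\otimes\mathscr{O}(X))$, collapses $\mathscr{O}(-X)\otimes\mathscr{O}(X)$ to the trivial bundle, and sends $[X]$ to $[X]/c(\mathscr{O}(X))$; and (iii) capping with $c(\mathscr{O}(X))^n$. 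Tracking the line-bundle powers then shows the result is exactly $\nu_X(n) = -c(T^*M\otimes\mathscr{O}(X))c(\mathscr{O}(X))^{n-\dim M}\cap[X]$.

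With both dual identities in hand, the theorem follows formally: by linearity,
\[
i_{n,\mathscr{O}(X)}(\nu_X(n) + \alpha_X(n)) = i_{n,\mathscr{O}(X)}(\nu_X(n)) + i_{n,\mathscr{O}(X)}(\alpha_X(n)) = c_{\text{F}}(X) + \mathcal{M}(X) = c_{\text{SM}}(X),
\]
and applying the involution $i_{n,\mathscr{O}(X)}$ to both sides (Proposition \ref{inv}) yields the companion identity $\nu_X(n) + \alpha_X(n) = i_{n,\mathscr{O}(X)}(c_{\text{SM}}(X))$.

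I expect no conceptual obstacle; the entire content is the bookkeeping in step (ii). The one place that demands care is tracking the rank $r$ entering formula \ref{tf} (here $\dim M - 1$, rather than the value appearing in Theorem \ref{mcs}) together with the codimension-one sign picked up by dualizing $[X]$. Getting these two inputs right is precisely what forces both the minus sign and the shifted exponent $n - \dim M$ in $\nu_X(n)$, in contrast to the $n+1-\dim M$ occurring in $\alpha_X(n)$.
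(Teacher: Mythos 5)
Your proposal is correct and takes essentially the same route as the paper: the paper likewise uses linearity of $i_{n,\mathscr{O}(X)}$ together with Proposition \ref{inv} and Theorem \ref{mcs} to reduce everything to the single identity $c_{\text{F}}(X)=i_{n,\mathscr{O}(X)}(\nu_X(n))$, which it verifies by the same \ref{df}/\ref{tf} manipulation, with the rank tracked through the class $TM-\mathscr{O}(X)$ of rank $\dim M-1$ and the minus sign coming from the codimension-one fact $[X]^{\vee}=-[X]$. The only cosmetic difference is direction: the paper massages $c_{\text{F}}(X)$ step by step into the form $i_{n,\mathscr{O}(X)}(\nu_X(n))$, whereas you apply $i_{n,\mathscr{O}(X)}$ to $c_{\text{F}}(X)$ and land on $\nu_X(n)$, which is equivalent by the involution.
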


\begin{proof}
By Proposition \ref{inv} and Theorem \ref{mcs}, the proof amounts to showing $c_{\text{F}}(X)=i_{n,\mathscr{O}(X)}(\nu_X(n))$, as $c_{\text{SM}}(X)=c_{\text{F}}(X)+\mathcal{M}(X)$. Thus

\begin{eqnarray*}
c_{\text{F}}(X)&=&c(TM)\cap s(X,M) \\
               &=&c(TM)\cap \left(c(N_XM)^{-1}\cap [X]\right) \\
               &=&c(TM)\cap \left([X]\otimes_M \mathscr{O}(X)\right) \\
							 &=&c(\mathscr{O}(X))^n\cap \left(\frac{c(TM)c(\mathscr{O})^{n-\text{dim}(M)}}{c(\mathscr{O}(X))^n}\cap \left([X]\otimes_M \mathscr{O}(X)\right)\right) \\
							 &\overset{\ref{tf}}=&c(\mathscr{O}(X))^n\cap \left(\left(c(TM\otimes \mathscr{O}(-X))c(\mathscr{O}(-X))^{n-\text{dim}(M)}\cap [X]\right)\otimes_M \mathscr{O}(X)\right) \\
							 &\overset{\ref{df}}=&c(\mathscr{O}(X))^n\cap \left(\left(c(T^*M\otimes \mathscr{O}(X))c(\mathscr{O}(X))^{n-\text{dim}(M)}\cap -[X]\right)^{\vee}\otimes_M \mathscr{O}(X)\right) \\
							 &=&i_{n,\mathscr{O}(X)}(\nu_X(n)),
\end{eqnarray*}
as desired.
\end{proof}

\section{$i_{n,\mathscr{L}}$ via involutive correspondences}\label{S4}
Let $M$ and $N$ be smooth proper $\mathfrak{K}$-varieties. A \emph{correspondence} from $M$ to $N$ is a class $\alpha\in A_*(M\times N)$, and such an $\alpha$ induces homomorphisms $\alpha_*\in \text{Hom}(A_*M,A_*N)$ and $\alpha^*\in \text{Hom}(A_*N,A_*M)$ given by 

\[
\beta\overset{\alpha_*}\longmapsto q_*(\alpha \cdot p^*\beta), \quad \gamma\overset{\alpha^*}\longmapsto p_*(\alpha \cdot q^*\gamma),
\] 
\\
where $p$ is the projection $M\times N\to M$, $q$ is the projection $M\times N\to N$ and `$\cdot$' denotes the intersection product in $A_*(M\times N)$ (which is well defined via the smoothness assumption on $M$and $N$). Correspondences are at the heart of Grothendieck's theory of motives, and generalize algebraic morphisms in the sense that we think of an arbitrary class $\alpha\in A_*(M\times N)$ as a generalization of the graph $\Gamma_f$ of a (proper) morphism $f\in \text{Hom}(M,N)$. Just as a morphism $f\in \text{Hom}(M,N)$ induces morphisms on the corresponding Chow groups via proper pushforward ($f_*$) and flat pullback ($f^*$), the morphisms $\alpha_*$ and $\alpha^*$ are direct generalizations of proper pushforward and flat pullback as $f_*=(\Gamma_f)_*$ and $f^*=(\Gamma_f)^*$. Moreover, correspondences may be composed in such a way that the functorial properties of porper pushforward and flat pullback still hold, i.e., $(\alpha \circ \vartheta)_*=\alpha_*\circ \vartheta_*$ and $(\alpha \circ \vartheta)^*=\vartheta^*\circ \alpha^*$ for composable correspondences $\alpha$ and $\vartheta$. From this perspective we were naturally led to the question of whether or not for an algebraic scheme $X$ the involutions $i_{n,\mathscr{L}}$ defined in \S\ref{S2} are induced by involutive correspondences in $A_*(X\times X)$. We answer this question for $X=\mathbb{P}^N$ via the following 

\begin{theorem}\label{c}
Let $N$ be a positive integer and $(n,m)\in \mathbb{Z}\times \mathbb{Z}$. Then there exists a unique $\alpha=\sum_{i+j\leq N}a_{i,j}x^iy^j\in \mathbb{Z}[x,y]/(x^{N+1},y^{N+1})\cong A_*(\mathbb{P}^N\times \mathbb{P}^N)$ such that $i_{n,\mathscr{O}(m)}=\alpha_*$\footnote{Note that $\alpha_*=\alpha^*$ in this case.}, and the coefficients of $\alpha$ are given by

\[
a_{N-j,i}=(-1)^j\left(\begin{array}{c} n-j \\ i-j \end{array}\right)m^{i-j}.
\]
\\
\end{theorem}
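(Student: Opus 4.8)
The plan is to reduce the statement to an explicit computation on the standard additive basis $h^0,\dots,h^N$ of $A_*\mathbb{P}^N$, where $h:=c_1(\mathscr{O}(1))$ and $h^\ell$ is the codimension-$\ell$ class $[\mathbb{P}^{N-\ell}]$, so that $A_*\mathbb{P}^N\cong \mathbb{Z}[h]/(h^{N+1})$ as a ring and cap product with $c(\mathscr{O}(m))^n$ is multiplication by $(1+mh)^n$. The structural observation driving both existence and uniqueness is that the assignment $\alpha\mapsto\alpha_*$ is a bijection from $A_*(\mathbb{P}^N\times\mathbb{P}^N)$ onto the group of $\mathbb{Z}$-linear endomorphisms of $A_*\mathbb{P}^N$: writing $x=p^*h$ and $y=q^*h$, the projection formula together with a dimension count (carried out below) gives, for $\alpha=\sum a_{i,j}x^iy^j$, the identity $\alpha_*(h^\ell)=\sum_j a_{N-\ell,j}\,h^j$, so the coefficients of $\alpha$ are read off directly as the matrix entries of $\alpha_*$. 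Consequently there is exactly one $\alpha$ with $i_{n,\mathscr{O}(m)}=\alpha_*$, and the theorem collapses to computing the matrix of $i_{n,\mathscr{O}(m)}$ in the basis $\{h^\ell\}$ and matching coefficients.

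First I would compute the left-hand map. Since $c(\mathscr{O}(m))=1+mh$, since $h^\ell$ is homogeneous of codimension $\ell$ so that $(h^\ell)^{\vee}=(-1)^\ell h^\ell$, and since the tensor operation applied to a homogeneous class gives $h^\ell\otimes_{\mathbb{P}^N}\mathscr{O}(m)=h^\ell/(1+mh)^\ell$, the definition of $i_{n,\mathscr{O}(m)}$ yields $i_{n,\mathscr{O}(m)}(h^\ell)=(1+mh)^n\cap\bigl((-1)^\ell h^\ell/(1+mh)^\ell\bigr)=(-1)^\ell h^\ell(1+mh)^{n-\ell}$. Expanding $(1+mh)^{n-\ell}=\sum_k\binom{n-\ell}{k}m^kh^k$, a finite sum modulo $h^{N+1}$ with $\binom{n-\ell}{k}$ the generalized binomial coefficient (valid for any integer $n$), and setting $j=\ell+k$ gives $i_{n,\mathscr{O}(m)}(h^\ell)=\sum_{j\ge\ell}(-1)^\ell\binom{n-\ell}{j-\ell}m^{j-\ell}h^j$.

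Next I would justify the pushforward identity and conclude. By the projection formula $q_*(x^iy^j)=h^j\cdot q_*(p^*h^i)$, and since $p^*h^i$ is represented by $[\mathbb{P}^{N-i}\times\mathbb{P}^N]$, which maps under $q$ onto the entire second factor, the pushforward $q_*(p^*h^i)$ vanishes for dimension reasons unless the cycle and its image have the same dimension, i.e.\ unless $i=N$, in which case the map is an isomorphism of degree one and the pushforward is $[\mathbb{P}^N]$; thus $q_*(x^iy^j)=\delta_{i,N}\,h^j$. Applying this to $\alpha\cdot p^*h^\ell=\sum_{i,j}a_{i,j}x^{i+\ell}y^j$ gives $\alpha_*(h^\ell)=\sum_j a_{N-\ell,j}\,h^j$, as asserted in the first paragraph. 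Comparing with the expansion above forces $a_{N-\ell,j}=(-1)^\ell\binom{n-\ell}{j-\ell}m^{j-\ell}$ for all $\ell,j$; relabeling the index $\ell$ as $j$ and $j$ as $i$ recovers the stated formula $a_{N-j,i}=(-1)^j\binom{n-j}{i-j}m^{i-j}$, and the bijectivity of $\alpha\mapsto\alpha_*$ supplies both existence and uniqueness.

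I do not expect a serious obstacle: the argument is a direct computation once the bijection $\alpha\mapsto\alpha_*$ is in hand. The two points requiring care are the dimension-count justification of $q_*(x^iy^j)=\delta_{i,N}h^j$ (the one place where the intersection-theoretic content genuinely enters) and the binomial bookkeeping needed to make the expansion of $(1+mh)^{n-\ell}$ line up exactly with the proposed closed form, including the conventions for the generalized binomial coefficient when $n$ or $n-\ell$ is negative. As a consistency check I would verify that the resulting correspondence squares to the class inducing the identity, equivalently that the matrix computed above is an involution, which it must be by Proposition \ref{inv}; this confirms that the coefficient formula alone pins down $\alpha$ with no further conditions imposed.
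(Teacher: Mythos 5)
Your proposal is correct and takes essentially the same route as the paper: compute $i_{n,\mathscr{O}(m)}$ explicitly on the monomial basis of $A_*\mathbb{P}^N$, identify $q_*$ as extraction of the coefficient of $x^N$, and match matrix entries against an arbitrary $\alpha=\sum a_{i,j}x^iy^j$ to get existence and uniqueness at once. The only (harmless) divergence is in justifying $q_*(x^iy^j)=\delta_{i,N}h^j$: you argue by a direct dimension count on the cycles $[\mathbb{P}^{N-i}\times\mathbb{P}^N]$, while the paper views $q$ as the projection of a trivial projective bundle and invokes $s(\mathscr{E})=q_*(1+x+x^2+\cdots)=1$ --- the same fact by a different (and in your case more elementary) route.
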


\begin{proof}
Everything follows by direct calculation, the details of which we only sketch. Consider $\mathbb{P}^N\times \mathbb{P}^N$ with the natural projections onto its first and second factors, which we denote by $p$ and $q$ respectively. Denote by $x$ the hyperplane class in the first factor and by $y$ the hyperplane class in the second factor (we use the same notations for their pullbacks via the natural projections). Let $\beta=\sum_{i=0}^N\beta_ix^i\in A_*\mathbb{P}^N$. It follows directly from the definition of $i_{n,\mathscr{O}(m)}$ and induction that

\[
i_{n,\mathscr{O}(m)}(\beta)=\sum_{i=0}^N\left(\sum_{j=0}^N (-1)^j\left(\begin{array}{c} n-j \\ i-j \end{array}\right)m^{i-j}\beta_j\right)y^i.
\]
\\
We now let $\alpha=\sum_{i+j\leq N}a_{i,j}x^iy^j\in A_*(\mathbb{P}^N\times \mathbb{P}^N)$ be arbitrary, compute $\alpha_*(\beta)=q_*(\alpha\cdot p^*\beta)$, set its coefficients equal to those of $i_{n,\mathscr{O}(m)}(\beta)$, and then observe that this determines the $a_{i,j}$ uniquely. Since we are not using a notational distinction for $x$ and its pullback $p^*x$, $p^*\beta$ retains exactly the same form in its expansion with respect to $x$. Now $\alpha\cdot p^*\beta$ is just usual multiplication in the ring $\mathbb{Z}[x,y]/(x^{N+1},y^{N+1})$, and $q_*(\alpha\cdot p^*\beta)$ is just the coefficient of $x^N$ in the expansion of $\alpha\cdot p^*\beta$ with respect to $x$, which yields

\[
\alpha_*(\beta)=\sum_{i=0}^N\left(\sum_{j=0}^Na_{N-j,i}\beta_j\right)y^i. 
\]
\\
By setting $\alpha_*(\beta)=i_{n,\mathscr{O}(m)}(\beta)$ the $a_{i,j}$ are then uniquely determined to be as stated in the conclusion of the theorem.

To see that $q_*(\gamma)$ for arbitrary $\gamma \in A_*(\mathbb{P}^N\times \mathbb{P}^N)$ is indeed the coefficient of $x^N$ in the expansion of $\gamma$ with respect to $x$, one may first view $q$ as the natural projection of the projective bundle $\mathbb{P}(\mathscr{E})$ with $\mathscr{E}$ the trivial rank $N+1$ bundle over $\mathbb{P}^N$ and $\mathscr{O}_{\mathbb{P}(\mathscr{E})}(1)=x$. Then by the projection formula, to compute $q_*(\gamma)$ we need only to compute $q_*(x^i)$ in the expansion of $\gamma$ with respect to $x$, which we do using the notion of \emph{Segre class} of a vector bundle\footnote{We note that the notion of Segre class of a vector bundle is different than the \emph{relative} Segre class we define in \S\ref{S1}. The definition of Segre class may be found in \cite{IntersectionTheory}, where Fulton first defines the Segre class of a vector bundle and then defines the Chern class of a vector bundle as the multiplicative inverse of its Segre class.}. By definition of the \emph{Segre class} of $\mathscr{E}$, denoted $s(\mathscr{E})$, we have

\[
s(\mathscr{E}):=q_*(1+x+x^2+\cdots). 
\]
\\ 
And since $s(\mathscr{E})=c(\mathscr{E})^{-1}=1$, matching terms of like dimension we see that all powers of $x$ map to $0$ except for $x^N$ which maps to $1$. 
\end{proof}

It would be interesting to determine sheaves on $\mathbb{P}^N\times \mathbb{P}^N$ (depending on $n$ and $m$) whose Chern characters coincide with $\alpha$ as given in Theorem \ref{c}. And certainly there must be a larger class of varieties (other than projective spaces) for which an analogue of Theorem \ref{c} holds.

\bibliographystyle{plain}
\bibliography{chowduality}

\end{document}